\theoremstyle{plain}%note that this is the default style (to learn)
\newtheorem{theorem}{Theorem}[section]
\newtheorem{corollary}[theorem]{Corollary}
\theoremstyle{definition}
\newtheorem{example}[theorem]{Example}
\newcommand{\Spvek}[2][r]{%
  \gdef\@VORNE{1}
  \left(\hskip-\arraycolsep%
    \begin{array}{#1}\vekSp@lten{#2}\end{array}%
  \hskip-\arraycolsep\right)}
\def\vekSp@lten#1{\xvekSp@lten#1;vekL@stLine;}
\def\vekL@stLine{vekL@stLine}
\def\xvekSp@lten#1;{\def\temp{#1}%
  \ifx\temp\vekL@stLine
  \else
    \ifnum\@VORNE=1\gdef\@VORNE{0}
    \else\@arraycr\fi%
    #1%
    \expandafter\xvekSp@lten
  \fi}
\begin{document}
\title[Two Measures on Cantor Sets]{Two Measures on Cantor Sets}
\author{G{\"o}kalp Alpan}
\author{Alexander Goncharov}
%\date{01.04.2014}

\begin{abstract}
We give an example of Cantor type set for which its equilibrium measure and the corresponding Hausdorff measure are mutually absolutely continuous. Also we show that these two measures are regular in Stahl-Totik sense.
\end{abstract}

\maketitle
\section{Introduction}

The relation between the  $\alpha$ dimensional Hausdorff measure $\Lambda_\alpha$ and the harmonic measure $\omega$ on a finitely connected domain $\Omega$ is understood well. Due to Makarov  \cite{Makarov}, we know that,
for a simply connected domain, $\dim{\omega}=1$ where $\dim{\omega}:= \inf\{\alpha : \omega \perp \Lambda_\alpha\}$. Pommerenke \cite{Pommerenke}  gives a full characterization of parts of $\partial\Omega$ where $\omega$ is absolutely continuous or singular with respect to a linear Hausdorff measure. Later similar facts were obtained
for finitely connected domains. In the infinitely connected case there are only particular results. Model example
here is $\Omega = \overline{\mathbb{C}} \setminus K$ for a Cantor-type set $K$. On most of such sets we have the strict inequality $\dim{\omega}< \alpha_K $ (see, e.g. \cite{Batakis}, \cite{Makarov Volberg}, \cite{Volberg}, \cite{Zdunik}), where $\alpha_K $ stands for the Hausdorff dimension of $K$.
This inequality implies that $\Lambda_{\alpha_K} \perp \omega$ on $K$. These results motivate the
problem to find a Cantor set for which its harmonic measure and the corresponding Hausdorff measure are not
mutually singular.

Recall that, for a dimension function $h$,  a set $E\subset \mathbb{C}$ is an $h$-set if  $0<\Lambda_h(E) < \infty$ where $\Lambda_h$ is the Hausdorff measure corresponding to the function $h$. We consider  introduced in \cite{Goncharov} Cantor-type sets $K(\gamma).$  In section 2 we present a function $h$ that makes $K(\gamma)$ an $h$-set. In section 3 we show that $\Lambda_h$ and $\omega$ are mutually absolutely continuous for $K(\gamma)$. In the last section we prove that these two measures are regular in Stahl-Totik sense.

We will denote by $\log$ the natural logarithm, $Cap(\cdot)$ stands for the logarithmic capacity.

\section{Dimension function of $K(\gamma)$ }

A function $h:{\mathbb{R}}_+ \to {\mathbb{R}}_+$ is called a dimension function if it is increasing, continuous
and $h(0)=0.$  Given set $E\subset \mathbb{C},$ its $h$-Hausdorff measure is defined as
\begin{equation}
\Lambda_h(E)=\lim_{\delta \rightarrow 0} \inf \left\{ \sum h(r_j) : E \subset \bigcup B(z_j, r_j)\,\,\,
 \mbox {with}\,\,\,   r_j \leq \delta \right\},
\end{equation}
where $B(z, r)$ is the open ball of radius $r$ centered at $z$.

For the convenience of the reader we repeat the relevant material from \cite{Goncharov}.
Given sequence $\gamma = (\gamma_s)_{s=1}^{\infty}$ with $0<\gamma_s\leq \frac{1}{32},$ let $r_0=1$ and $r_s=\gamma_s r_{s-1}^2$ for $s\in \mathbb{N}$. Define $P_2(x)=x(x-1)$ and $P_{2^{s+1}}=P_{2^s}(P_{2^s}+r_s)$ for $s\in \mathbb{N}$.
 Consider the set $$ E_s:=\{x\in {\mathbb{R}}:\,P_{2^{s+1}} (x)\leq 0\} =\cup_{j=1}^{2^s}I_{j,s}.$$
The $s$-th level {\it basic} intervals $I_{j,s}$ with lengths $l_{j,s}$ are disjoint and
$\max_{1\leq j \leq 2^s} l_{j,s} \to 0$ as $s\to \infty.$ Since $E_{s+1} \subset E_s$, we have a Cantor type set $K(\gamma) := \cap_{s=0}^{\infty} E_s.$ The set $K(\gamma)$ is non polar if and only $\sum_{s=1}^{\infty} 2^{-s}\log{\frac{1}{\gamma_s}} < \infty$. In this paper we make the assumption

\begin{equation}
\sum_{s=1}^\infty \gamma_s < \infty.
\end{equation}
Let $M:=1+\exp{\left(16\sum_{s=1}^\infty \gamma_s\right)},$ so $M>2$, and $\delta_s:= \gamma_1\gamma_2\dots\gamma_s$.
By Lemma 6 in \cite{Goncharov}\label{zzz},
\begin{equation}
\delta_s < l_{j,s}< M\cdot\delta_s \,\,\,\mbox{for} \,\,\, 1 \leq j\leq 2^s.
\end{equation}

We construct a dimension function for $K(\gamma),$ following Nevanlinna \cite{Nevanlinna}.
Let  $\eta( \delta_s)=s$ for $s\in \mathbb{Z}_+$ with $\delta_0:= 1.$
We define $\eta(t)$ for $(\delta_{s+1},\delta_{s})$ by
$$
\eta(t)= s+ \frac{\log{\frac{\delta_s}{t}}}{\log{\frac{\delta_s}{\delta_{s+1}}}}.
$$
This makes $\eta$ continuous and monotonically decreasing on $(0,1].$  In addition, we have $\lim_{t\rightarrow 0}\eta(t)=\infty$. Also observe that, for the derivative of $\eta$ on $(\delta_{s+1}, \delta_{s})$, we have
$$
\frac{d\eta}{dt}=\frac{-1}{t\log{\frac{1}{\gamma_{s+1}}}}\geq \frac{-1}{t\log{32}} \,\,\,\,\,\,\mbox {and} \,\,\,\,\,\,
\frac{d\eta}{d\log{t}}\geq \frac{-1}{\log{32}}.
$$

Define $h(t)=2^{-\eta(t)}$ for $0<t\leq 1$ and $h(t)=1$ for $t > 1.$ Then $h$ is  a dimension function
with $h(\delta_s) =2^{-s}$ and
$$ \frac{d\log{h}}{d\log{t}} \leq \frac{\log{2}}{\log{32}} <1. $$

Therefore if $m>1$ and $r\leq 1$ we get the following inequality:
$$ \log{\frac{h(r)}{h\left(\frac{r}{m}\right)}}< \int_{r/m}^r d\log{t} = \log{m}.$$

Finally, we obtain
\begin{equation} \label{pp}
h(r)<m\cdot{h\left(\frac{r}{m}\right)} \,\,\,\, \mbox {for}\,\,\,  m>1 \,\,\,\mbox {and } \,\,\, 0< r\leq 1.
\end{equation}

Let us show that $K(\gamma)$ is an $h$-set for the given function $h.$
\begin{theorem}\label{ygyg} Let $\gamma$ satisfy (2.2).  Then  $ 1/8 \leq\Lambda_h(K(\gamma))\leq M/2$.
\end{theorem}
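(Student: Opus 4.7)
The plan is to prove the two bounds separately. The upper bound comes from the explicit cover of $K(\gamma)$ by the level-$s$ basic intervals $I_{j,s}$. The lower bound comes from the mass distribution principle applied to the natural Cantor probability measure $\mu$ on $K(\gamma)$ defined by $\mu(I_{j,s}) = 2^{-s}$ for every $s \geq 0$ and $1 \leq j \leq 2^s$; this is consistent because each $I_{j,s}$ splits into exactly two basic intervals at the next level.

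For the upper bound, $K(\gamma) \subset \bigcup_{j=1}^{2^s} I_{j,s}$, so I would cover it by the $2^s$ open balls concentric with the $I_{j,s}$ having radius slightly exceeding $l_{j,s}/2$. By (2.3) these radii are bounded by $M\delta_s/2$ up to an arbitrarily small $\varepsilon$ and tend to $0$ as $s \to \infty$. Applying the doubling inequality (2.4) with $m = M/2 > 1$ gives $h(M\delta_s/2) < (M/2)\,h(\delta_s) = M\cdot 2^{-s-1}$, so the total cost is at most $2^s \cdot M\cdot 2^{-s-1} = M/2$, yielding $\Lambda_h(K(\gamma)) \leq M/2$.

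For the lower bound, by the mass distribution principle it suffices to produce $C > 0$ with $\mu(B(z,r)) \leq C\,h(r)$ for every ball of small radius, since then $\Lambda_h(K(\gamma)) \geq \mu(K(\gamma))/C = 1/C$. Given such a ball, $B(z,r) \cap \mathbb{R}$ lies in an interval $J$ of length at most $2r$; assuming $2r < 1 = \delta_0$, let $s \geq 1$ be the unique integer with $\delta_s \leq 2r < \delta_{s-1}$. The key geometric observation is that, since $|J| < \delta_{s-1} < l_{k,s-1}$ for every $k$ by (2.3), the interval $J$ meets at most two of the pairwise disjoint level-$(s-1)$ intervals: three would force an entire intermediate $I_{k,s-1}$ to lie inside $J$, which is impossible. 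Hence $J$ meets at most four level-$s$ intervals, and
\[
\mu(B(z,r)) \leq 4\cdot 2^{-s} = 4\,h(\delta_s) \leq 4\,h(2r) < 8\,h(r),
\]
the last step being (2.4) with $m=2$. This gives $C = 8$ and so $\Lambda_h(K(\gamma)) \geq 1/8$.

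The delicate point is this combinatorial count at level $s-1$, which depends crucially on the strict lower estimate $l_{k,s-1} > \delta_{s-1}$ from (2.3) to prevent three consecutive level-$(s-1)$ intervals from all meeting $J$. Apart from this, inequality (2.4) is the only nontrivial ingredient, invoked once with $m = M/2$ for the upper estimate and once with $m = 2$ for the lower.
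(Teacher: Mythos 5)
Your proof is correct, and while the upper bound coincides exactly with the paper's argument (cover by the $2^s$ level-$s$ basic intervals, radius at most $M\delta_s/2$, then (2.4) with $m=M/2$), your lower bound takes a genuinely different route. The paper works directly with an arbitrary open cover: it reduces to finitely many disjoint intervals $J_\nu$, attaches to each the smallest level $q_\nu$ at which $J_\nu$ contains a basic interval, covers $J_\nu$ by at most four adjacent level-$q_\nu$ intervals, and counts level-$n$ subintervals to obtain $\sum_\nu h(d_\nu)\geq \sum_\nu 2^{-q_\nu}\geq 2^{-n-2}\sum_\nu N_\nu=1/4$, hence $\sum_\nu h(r_\nu)>1/8$ by the doubling inequality. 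You instead introduce the balanced measure $\mu$ with $\mu(I_{j,s})=2^{-s}$ and verify the Frostman condition $\mu(B(z,r))\leq 8\,h(r)$, using the observation that an interval of length $2r<\delta_{s-1}<l_{k,s-1}$ cannot contain a level-$(s-1)$ basic interval and therefore meets at most two of them, hence at most four level-$s$ intervals. Both arguments turn on the same combinatorial fact and both produce the constant $1/8$; yours is shorter and more standard, but it presupposes the existence of the balanced measure (routine via the usual extension argument, though the paper only produces this measure --- identified as $\mu_{K(\gamma)}$ --- in Section 3 through weak-star convergence). Indeed, your density estimate is essentially inequality (3.1), which the paper establishes later in Theorem 3.5; the paper's Section 2 proof has the advantage of being entirely self-contained and measure-free.
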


\begin{proof}
First, observe that, by (2.3), for each $s\in\mathbb{N}$ the set $K(\gamma)$ can be covered by $2^s$ intervals of length $M\cdot{\delta_s}.$  Since $M/2>1,$ we have by (2.4),
$$
\Lambda_h(K(\gamma))\leq \limsup_{s\to \infty} (2^s\cdot h({M/2\cdot{\delta_s}}))\leq \limsup_{s\to \infty} (2^s\cdot{M/2}\cdot{h(\delta_s)})=M/2.
$$

We proceed to show the lower bound. Let $(J_{\nu})$ be an open cover of $K(\gamma)$. Then, by compactness, there are finitely many intervals $(J_{\nu})_{\nu=1}^m$ that cover $K(\gamma)$. Since $K(\gamma)$ is totally disconnected, we can assume that these intervals are disjoint. Each $J_{\nu}$ contains a closed subinterval $J_{\nu}^{\prime}=[a_{\nu}, b_{\nu}]$ whose endpoints belong to $K(\gamma)$ and covers all points of $K(\gamma)$ in $J_{\nu}$. Since the intervals $(J_{\nu}^{\prime})_{\nu=1}^m$ are disjoint, all $a_{\nu}, b_{\nu}$ are endpoints of some basic intervals. Let $n$ be the minimal number such that all $(a_{\nu})_{\nu=1}^m ,
(b_{\nu})_{\nu=1}^m$ are the endpoints of $n-$th level. Thus, each $I_{j,n}$ for $1\leq j \leq 2^n$ is contained in some $J_{\nu}^{\prime}.$  Let $N_{\nu}$ be the number of $n$-th level intervals in $J_{\nu}^{\prime}.$
Clearly,  $\sum_{\nu=1}^{m} N_{\nu} =2^n.$

For a fixed $\nu \in \{1,2,\ldots, m\}$, let $q_{\nu}$ be the smallest number such that $J^{\prime}_\nu$ contains at least one basic interval $I_{j,q_{\nu}}.$ Clearly, $q_{\nu}\leq n$ and $l_{j,q_{\nu}}\leq d_{\nu}$ where $d_{\nu}$ is the length of $J_{\nu}$. Therefore, by (2.3),
$$
h(d_{\nu})\geq h(l_{j, q_{\nu}})\geq h(\delta_{q_{\nu}})=2^{-q_{\nu}}.
$$

Let us cover $J_{\nu}^{\prime}$ by the smallest set $G_{\nu}$ which is a finite union of adjacent intervals of the level $q_{\nu}.$  Observe that $G_{\nu}$ consists of at least one and at most four such intervals. Each interval of
the $q_{\nu}-$th level contains $2^{n-q_{\nu}}$ subintervals of the $n-$th level. This gives at most
$2^{n-q_{\nu}+2}$ intervals of level $n$ in the set $G_{\nu}$. Hence
$$
N_{\nu} \leq 2^{n-q_{\nu}+2}.
$$
Therefore,
$$
\sum_{\nu=1}^{m}h(d_\nu) \geq \sum_{\nu=1}^{m} 2^{-q_{\nu}} \geq 2^{-n-2}\,\sum_{\nu=1}^{m} N_{\nu}= 1/4.
$$
Since $h(d)<2\cdot h(d/2)$ from (2.4), finally we obtain the desired bound.
\end{proof}

 Similar arguments apply to the case of a part of $K(\gamma)$ on any basic interval.

\begin{corollary} \label{eee} Let $\gamma$ satisfy (2.2).  Then
$ 2^{-s-3} \leq\Lambda_h(K(\gamma)\cap I_{j,s})\leq M \cdot 2^{-s-1}$
for each $s\in \mathbb{N}$ and $1\leq j \leq 2^s.$
\end{corollary}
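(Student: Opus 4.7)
The plan is to mimic the proof of Theorem~\ref{ygyg} step by step, exploiting the fact that the basic interval $I_{j,s}$ contains exactly $2^t$ basic subintervals $I_{k,s+t}$ at every subsequent level $s+t$, and that these subintervals still obey the two-sided bound (2.3), namely $\delta_{s+t}<l_{k,s+t}<M\delta_{s+t}$. Thus $K(\gamma)\cap I_{j,s}$ behaves like a shifted copy of $K(\gamma)$ starting from level $s$, and all counting/length estimates of Theorem~\ref{ygyg} transport verbatim.

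For the upper bound, I would cover $K(\gamma)\cap I_{j,s}$ by its $2^t$ level-$(s+t)$ basic subintervals and let $t\to\infty$. Applying property (2.4) with $m=M/2>1$ gives
$$\Lambda_h(K(\gamma)\cap I_{j,s})\leq \limsup_{t\to\infty}\bigl(2^t\cdot h(M\delta_{s+t}/2)\bigr)\leq \limsup_{t\to\infty}\bigl(2^t\cdot (M/2)\cdot 2^{-(s+t)}\bigr)=M\cdot 2^{-s-1}.$$

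For the lower bound, I would repeat the combinatorial argument of Theorem~\ref{ygyg} with the sole change that the global count $\sum_\nu N_\nu=2^n$ is replaced by the local count $\sum_\nu N_\nu=2^{n-s}$, i.e.\ the number of level-$n$ basic intervals contained in $I_{j,s}$. Starting from a finite disjoint cover $(J_\nu)_{\nu=1}^m$ of $K(\gamma)\cap I_{j,s}$, pass to closed refinements $J'_\nu=[a_\nu,b_\nu]$ with endpoints in $K(\gamma)\cap I_{j,s}$, choose a common minimal level $n\geq s$ at which all these endpoints occur, and for each $\nu$ define the smallest $q_\nu$ for which $J'_\nu$ contains a level-$q_\nu$ basic interval. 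Then, exactly as in Theorem~\ref{ygyg}, $h(d_\nu)\geq 2^{-q_\nu}$ and $N_\nu\leq 2^{n-q_\nu+2}$, so
$$\sum_{\nu=1}^m h(d_\nu)\geq 2^{-n-2}\sum_{\nu=1}^m N_\nu=2^{-n-2}\cdot 2^{n-s}=2^{-s-2},$$
and the usual passage $h(d)<2\,h(d/2)$ yields the stated lower bound $2^{-s-3}$.

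There is no real obstacle here: once the self-similarity described in the first paragraph is observed, every inequality needed is already present in the proof of Theorem~\ref{ygyg}. The mildest subtlety is checking that one may indeed assume the endpoints $a_\nu,b_\nu$ of the refinements $J'_\nu$ to be endpoints of basic intervals; this is justified exactly as in Theorem~\ref{ygyg}, since these endpoints lie in $K(\gamma)\cap I_{j,s}$ and the disjointness of the $J'_\nu$ prevents them from accumulating $K(\gamma)$-points from the wrong side.
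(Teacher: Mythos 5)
Your proposal is correct and follows exactly the route the paper intends: the paper gives no separate proof of the corollary, merely noting that ``similar arguments apply'' to $K(\gamma)\cap I_{j,s}$, and your localization (covering by the $2^t$ level-$(s+t)$ subintervals for the upper bound, and replacing the global count $\sum_\nu N_\nu=2^n$ by $2^{n-s}$ in the lower bound) is precisely that adaptation. The arithmetic checks out and yields the stated constants $2^{-s-3}$ and $M\cdot 2^{-s-1}$.
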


{\sc Remark}. A set $E$ is called {\it dimensional} if there is at least one dimension function $h$ that makes $E$
an $h-$set. It should be noted that not all sets are dimensional. If we replace the condition $h(0)= 0$
by $h(0)\geq 0,$ then any sequence gives a trivial example of a dimensionless set.
Best in \cite{Best} presented an example of a dimensionless Cantor set provided  $h(0)= 0$. The author considered
dimension functions with the additional condition of concavity, but did not used it in his construction.

\section{Harmonic Measure and Hausdorff measure for $K(\gamma)$}
 Suppose we are given a non polar compact set $K$ that coincides with its exterior boundary. Then for the
equilibrium measure $\mu_K$ on $K$  we have the representation
$\mu_K(\cdot)=\omega(\infty, \cdot, \overline{\mathbb{C}}\setminus K)$ in terms of the value of the
harmonic measure at infinity (see e.g. \cite{Ransford}, T.4.3.14).
 Moreover, since measures $\omega(z_1,\cdot,  \overline{\mathbb{C}}\setminus K)$ and $\omega(z_2,\cdot,  \overline{\mathbb{C}}\setminus K)$ are mutually absolutely continuous (see e.g. \cite{Ransford} Cor.4.3.5), our main result is valid even if, instead of $\mu_{K(\gamma)},$ we take the measure corresponding to the value of
the harmonic measure at any other point.

The next theorem follows immediately from the definition of $\Lambda_h$. It is a simple part of
Frostman's theorem (see e.g. T.D.1 in \cite{Garnett}).

\begin{theorem}  Let $h$ be a dimension function. If $\mu$ is a positive Borel measure such that
$$\mu(B(z,r))\leq h(r)$$ for all $z$ and $r$, then the following is valid for any Borel set $E$
$$\mu(E)\leq \Lambda_h(E).$$
\end{theorem}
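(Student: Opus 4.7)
The plan is very short: the inequality is essentially a direct consequence of the definition of $\Lambda_h$ together with countable subadditivity of $\mu$, so no deep idea is needed.

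First I would fix a Borel set $E$ and an arbitrary $\delta>0$, and let $\{B(z_j,r_j)\}$ be any (at most countable) cover of $E$ with $r_j\leq\delta$; if no such cover exists or if $\Lambda_h(E)=\infty$, the conclusion is trivial. Using that $\mu$ is a positive Borel measure and then invoking the hypothesis ball-by-ball, I would write
\begin{equation*}
\mu(E)\leq \mu\!\left(\bigcup_j B(z_j,r_j)\right)\leq \sum_j \mu(B(z_j,r_j))\leq \sum_j h(r_j).
\end{equation*}

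Next I would take the infimum of the right-hand side over all admissible $\delta$-covers of $E$. Since the left-hand side $\mu(E)$ does not depend on the cover, this yields
\begin{equation*}
\mu(E)\leq \inf\!\left\{\sum_j h(r_j):E\subset\bigcup_j B(z_j,r_j),\ r_j\leq \delta\right\}.
\end{equation*}
Finally, letting $\delta\to 0^+$ and using the definition (2.1) of $\Lambda_h(E)$ as the limit of these infima, I would obtain $\mu(E)\leq\Lambda_h(E)$, which is the desired inequality.

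There is essentially no obstacle here; the only points worth stating explicitly in the write-up are (i) that the family of covers shrinks as $\delta$ decreases, so the infimum is monotone and the limit in (2.1) exists, and (ii) that countable subadditivity of $\mu$ (a standard property of Borel measures) is what allows the pointwise hypothesis $\mu(B(z,r))\leq h(r)$ to be summed. No property of $h$ beyond being defined on $\mathbb{R}_+$ is used in this direction; in particular monotonicity, continuity, or $h(0)=0$ are not needed for this half of Frostman's theorem.
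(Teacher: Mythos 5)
Your proof is correct and is exactly the argument the paper has in mind: the paper gives no written proof, simply remarking that the statement ``follows immediately from the definition of $\Lambda_h$'' (it is the easy half of Frostman's theorem, cited from \cite{Garnett}), and your cover-by-cover estimate via countable subadditivity followed by taking the infimum and letting $\delta\to 0^+$ is precisely that immediate deduction. Your closing observations about monotonicity of the infima and the irrelevance of the regularity properties of $h$ for this direction are accurate.
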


For the converse relation we use a simple version of T.7.6.1.(a) in \cite{ Przytycki}. Here, $b(1)$ is
the Besicovitch covering number corresponding to the line (one can take $b(1)=5$).
\begin{theorem}  Assume that $\mu$ is a Borel probability measure on $\mathbb{R}$ and $A$ is a bounded
Borel subset of $\mathbb{R}$. If there exists a constant $C$ such that
$$ h(r)\leq C\cdot \mu(B(x,r))$$
%$$\lim_{r\to 0}\sup \frac{\mu(B(x,r))}{ h(r)} \geq C  $$
for all $x \in A$ and $r>0,$ then for any Borel set $E\subset A$
$$\Lambda_h(E)\leq b(1)\,C \cdot \mu(E).$$
\end{theorem}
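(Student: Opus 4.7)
The plan is to convert the pointwise density hypothesis $h(r)\le C\,\mu(B(x,r))$, valid for $x\in A$, into an upper bound on $\Lambda_h(E)$ by producing, at each scale $\delta>0$, an admissible covering of $E$ whose $h$-mass is controlled by $\mu(E)$. The natural tool on the line is the Besicovitch covering theorem: from any Vitali-type collection of balls centered at points of $E$ it extracts a countable subfamily that still covers $E$ with pointwise multiplicity at most $b(1)$. The hypothesis, applied term by term at the centers (which lie in $A$), then replaces $\sum h(r_j)$ by $C$ times the $\mu$-mass of the extracted family, and the bounded multiplicity collapses this to $C\,b(1)\,\mu(U)$ for any open neighborhood $U$ of $E$.

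In more detail, I would fix $\delta>0$ and $\varepsilon>0$. Outer regularity of the finite Borel measure $\mu$ on $\mathbb{R}$ supplies an open set $U\supset E$ with $\mu(U)\le\mu(E)+\varepsilon$. For every $x\in E\subset A$, since $U$ is open, I can pick a radius $r_x$ with $0<r_x<\delta$ and $B(x,r_x)\subset U$. The family $\{B(x,r_x)\}_{x\in E}$ has bounded radii and centers in the bounded set $E$, so the one-dimensional Besicovitch theorem yields a countable subfamily $\{B(x_j,r_j)\}_j$ covering $E$ with $\sum_j\mathbf{1}_{B(x_j,r_j)}\le b(1)$ pointwise on $\mathbb{R}$. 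Because each $x_j\in A$, the hypothesis and Fubini give
\[
\sum_j h(r_j)\le C\sum_j \mu\bigl(B(x_j,r_j)\bigr)=C\int_{\mathbb{R}}\sum_j\mathbf{1}_{B(x_j,r_j)}\,d\mu\le C\,b(1)\,\mu(U)\le C\,b(1)\bigl(\mu(E)+\varepsilon\bigr).
\]
Each $r_j<\delta$, so this family is admissible in the defining infimum (2.1). The right-hand side is independent of $\delta$, so letting first $\delta\to 0$ and then $\varepsilon\to 0$ delivers the claimed inequality $\Lambda_h(E)\le b(1)\,C\,\mu(E)$.

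I do not anticipate a serious obstacle. The one-dimensional Besicovitch covering theorem (with constant $b(1)=5$) and outer regularity of a finite Borel measure on $\mathbb{R}$ are standard ingredients, and nothing specific about the set $K(\gamma)$ or the function $h$ enters the argument. The only point that demands any care is insisting that the extracted subcover consist of balls centered at points of $E$, since the hypothesis controls $\mu(B(x,r))$ only for $x\in A$; centering the Vitali-type family at $E$ from the outset handles this automatically.
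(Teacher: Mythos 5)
Your proof is correct. Note that the paper does not actually prove this statement --- it imports it verbatim as ``a simple version of T.7.6.1.(a)'' from Przytycki--Urbanski --- so there is no in-paper argument to compare against; your combination of outer regularity, the one-dimensional Besicovitch covering theorem with multiplicity constant $b(1)$, and Tonelli to collapse $\sum_j \mu(B(x_j,r_j))$ to $b(1)\,\mu(U)$ is exactly the standard proof of the cited result, and you correctly handle the one delicate point (keeping all centers in $E\subset A$ so that the hypothesis applies to each ball of the extracted subfamily).
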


\vspace{7mm}

The set $K(\gamma)$ is weakly equilibrium in the following sense. Given $s\in\mathbb{N},$ we uniformly distribute the mass $2^{-s}$ on each $I_{j,s}$ for $1\leq j\leq 2^{-s}$. Let us denote by $\lambda_s$ the normalized in this sense Lebesgue measure on $E_s,$ so $d\lambda_s=(2^s l_{j,s})^{-1} dt$ on $I_{j,s}$.

\begin{theorem} (\cite{Goncharov}\label{yyyuuu},T.4) Suppose $K(\gamma)$ is not polar. Then $\lambda_s$ is weak star convergent to the equilibrium measure $\mu_{K(\gamma)}$.
\end{theorem}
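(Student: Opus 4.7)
My plan is to establish the weak-star convergence $\lambda_s \to \mu_{K(\gamma)}$ by inserting the intermediate equilibrium measures $\mu_{E_s}$ and proving the two convergences $\lambda_s - \mu_{E_s} \to 0$ and $\mu_{E_s} \to \mu_{K(\gamma)}$ weak-star separately.

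The second convergence, $\mu_{E_s} \to \mu_{K(\gamma)}$, is the standard fact that equilibrium measures of a decreasing sequence of compact sets with non-polar intersection converge weak-star to the equilibrium measure of the intersection. Since the $E_s$ decrease to $K(\gamma)$ and $K(\gamma)$ is assumed non-polar, $Cap(E_s) \downarrow Cap(K(\gamma)) > 0$. By weak-star compactness, any accumulation point $\nu$ of $(\mu_{E_s})$ is supported on $K(\gamma)$; by lower semi-continuity of logarithmic energy, $I(\nu) \leq \liminf I(\mu_{E_s}) = \log(1/Cap(K(\gamma))) = I(\mu_{K(\gamma)})$, and uniqueness of the minimizer forces $\nu = \mu_{K(\gamma)}$.

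For the first convergence, $\lambda_s - \mu_{E_s} \to 0$, the crux is that both measures distribute mass $2^{-s}$ to each basic interval $I_{j,s}$. For $\lambda_s$ this holds by construction. For $\mu_{E_s}$, one first observes inductively that the monic polynomial $P_{2^{s+1}} = P_{2^s}(P_{2^s} + r_s)$ maps $E_s$ properly onto the interval $J_s := [-r_s^2/4,\, 0]$ as a degree-$2^{s+1}$ branched cover in which every component $I_{j,s}$ is a two-sheeted branch. The classical density formula for the equilibrium measure of a polynomial preimage of an interval then gives
$$
d\mu_{E_s}(x) = \frac{|P'_{2^{s+1}}(x)|}{2^{s+1}\,\pi\,\sqrt{-P_{2^{s+1}}(x)\bigl(P_{2^{s+1}}(x) + r_s^2/4\bigr)}}\,dx,
$$
and the substitution $u = P_{2^{s+1}}(x)$ on each of the two branches of $I_{j,s}$ yields $\mu_{E_s}(I_{j,s}) = 2^{-s}$, independently of $j$. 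Given $f \in C([0,1])$ with modulus of continuity $\omega_f$ and arbitrary $a_j \in I_{j,s}$, the equal-mass property lets us write
$$
\int f\,d(\lambda_s - \mu_{E_s}) = \sum_{j=1}^{2^s} \int_{I_{j,s}} \bigl(f - f(a_j)\bigr)\,d(\lambda_s - \mu_{E_s}),
$$
whose absolute value is bounded by $2\,\omega_f(M\,\delta_s) \to 0$ by (2.3).

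The main obstacle is justifying $\mu_{E_s}(I_{j,s}) = 2^{-s}$: one must both carry out the induction confirming the branched cover structure $P_{2^{s+1}}\colon E_s \to J_s$ (in particular that the minimum value of $P_{2^{s+1}}$ on $E_s$ is $-r_s^2/4$ attained once per component) and invoke the polynomial-preimage density formula, whose branchwise integration makes the mass of each $I_{j,s}$ the same regardless of the (potentially quite unequal) interval length $l_{j,s}$.
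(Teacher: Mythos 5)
The paper does not actually prove this statement --- it is quoted as Theorem 4 of \cite{Goncharov} --- so your argument has to stand on its own, and in outline it does; it is a correct and self-contained route to the result. The half $\mu_{E_s}\to\mu_{K(\gamma)}$ is exactly the standard descent argument (any weak-star limit $\nu$ lives on $\bigcap E_s$, has energy at most $\lim\log\left(1/Cap(E_s)\right)=\log\left(1/Cap(K(\gamma))\right)$ by continuity of capacity on decreasing compacts, and uniqueness of the minimizer forces $\nu=\mu_{K(\gamma)}$), and once the equal-mass property $\mu_{E_s}(I_{j,s})=2^{-s}$ is available, the modulus-of-continuity estimate $\bigl|\int f\,d(\lambda_s-\mu_{E_s})\bigr|\leq 2\,\omega_f(M\delta_s)$ closes the argument. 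The one piece of real work is the one you flagged yourself: the inductive claim that $P_{2^{s+1}}$ maps each $I_{j,s}$ two-to-one onto $[-r_s^2/4,0]$. This does go through: writing $P_{2^{s+1}}=u(u+r_s)$ with $u=P_{2^s}$, the induction hypothesis that $P_{2^s}$ restricted to $I_{j,s}$ is a monotone bijection onto $[-r_s,0]$ (which uses $r_s<r_{s-1}^2/4$, i.e. $\gamma_s<1/4$, guaranteed by $\gamma_s\leq 1/32$) shows that $P_{2^{s+1}}$ is unimodal on $I_{j,s}$ with minimum $-r_s^2/4$ attained once, and that $E_{s+1}\cap I_{j,s}$ splits into two monotone branches over $[-r_{s+1},0]$, which is the induction hypothesis at the next level; the pullback formula $\mu_{T^{-1}(F)}=\deg(T)^{-1}\,T^{*}\mu_F$ for polynomial inverse images then gives each component mass $2/2^{s+1}$. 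You should either carry out this induction explicitly or cite it, since it is precisely the structural content established in \cite{Goncharov} about $E_s$; with that reference supplied, your proof via the equilibrium measures $\mu_{E_s}$ and the polynomial-inverse-image formula is a legitimate, and rather clean, alternative to the source's own derivation.
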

\begin{corollary} \label{e}  Suppose $K(\gamma)$ is not polar. Then $\mu_{K(\gamma)}(I_{j,s})=2^{-s}$
for each $s\in \mathbb{N}$ and $1\leq j \leq 2^s.$
\end{corollary}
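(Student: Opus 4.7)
The plan is to deduce the corollary directly from the weak-$*$ convergence $\lambda_n \to \mu_{K(\gamma)}$ supplied by Theorem 3.3, combined with the following two observations about the basic intervals.

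First, I would unpack the definition of $\lambda_n$ to compute $\lambda_n(I_{j,s})$ for every $n \geq s$. Since each $I_{j,s}$ contains exactly $2^{n-s}$ disjoint basic intervals of level $n$, and $\lambda_n$ assigns mass $(2^n l_{k,n})^{-1} dt$ on each $I_{k,n}$ so that $\lambda_n(I_{k,n}) = 2^{-n}$, summing over the $2^{n-s}$ children of $I_{j,s}$ gives $\lambda_n(I_{j,s}) = 2^{n-s} \cdot 2^{-n} = 2^{-s}$ for every $n \geq s$.

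Next, I would upgrade this equality to the limit using the Portmanteau characterization: since each $I_{j,s}$ is closed and $\lambda_n \to \mu_{K(\gamma)}$ weakly, we get
\[
\mu_{K(\gamma)}(I_{j,s}) \geq \limsup_{n\to\infty} \lambda_n(I_{j,s}) = 2^{-s}.
\]
To conclude equality, I would exploit the budget constraint: the $2^s$ basic intervals at level $s$ are pairwise disjoint and cover $K(\gamma) = \bigcap_{s} E_s$, while $\mu_{K(\gamma)}$ is a probability measure supported on $K(\gamma)$. Therefore
\[
1 = \mu_{K(\gamma)}(K(\gamma)) = \sum_{j=1}^{2^s} \mu_{K(\gamma)}(I_{j,s}) \geq \sum_{j=1}^{2^s} 2^{-s} = 1,
\]
which forces each term to equal $2^{-s}$ exactly.

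There is no real obstacle here: the mild subtlety is that weak-$*$ convergence does not a priori give convergence of measures on arbitrary Borel sets, but for closed sets (and in particular for the closed basic intervals $I_{j,s}$) the one-sided Portmanteau inequality is enough, and the pigeonhole argument on the total mass converts it into equality without needing to verify any boundary condition like $\mu_{K(\gamma)}(\partial I_{j,s}) = 0$.
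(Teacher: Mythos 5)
Your proof is correct, but it passes to the limit differently from the paper. The paper observes that $\chi_{I_{j,s}}$ is continuous on $E_n$ for $n\geq s$ (each $I_{j,s}$ is relatively clopen in $E_s$, which carries all the measures involved), so weak-star convergence applies to this test function directly and yields $\mu_{K(\gamma)}(I_{j,s})=\lim_n\int\chi_{I_{j,s}}\,d\lambda_n=2^{-s}$ in one step; the computation $\int\chi_{I_{j,s}}\,d\lambda_n=2^{-n+(n-s)\cdot 0}\cdot\ldots=2^{-s}$ is the same counting of the $2^{n-s}$ level-$n$ children that you perform. You instead invoke only the one-sided Portmanteau inequality $\mu_{K(\gamma)}(F)\geq\limsup_n\lambda_n(F)$ for the closed set $F=I_{j,s}$, and then recover equality by summing the $2^s$ disjoint lower bounds against the total mass $1$. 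Your route avoids having to note that the indicator is continuous on the common carrier (or to extend it continuously off $E_s$), at the cost of needing two extra facts — that $\mu_{K(\gamma)}$ is a probability measure and that it is supported on $K(\gamma)\subset\bigcup_j I_{j,s}$ — both of which are immediate here. The two arguments are equally rigorous; the paper's is a hair shorter, yours is a hair more robust since, as you note, it sidesteps any boundary-mass considerations entirely.
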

\begin{proof} Indeed, the characteristic function $\chi_{I_{j,s}}$ is continuous on $E_n$ for $n \geq s,$
 where $E_n$ is given in the construction of $K(\gamma)$.
Therefore, $\mu_{K(\gamma)} (I_{j,s})= \int\, \chi_{I_{j,s}}\, d\mu_{K(\gamma)} =
\lim_{n \to \infty} \int \, \chi_{I_{j,s}}\,d\lambda_n = 2^{-s}.$
\end{proof}

In our main theorem and below, by $\Lambda_h$ we mean restricted to the compact set $K(\gamma)$ the Hausdorff measure corresponding to the constructed function $h$.

\begin{theorem} \label{aaauuu}Let $\gamma$ satisfy (2.2) and $K(\gamma)$ be non polar.  Then measures
$\mu_{K(\gamma)}$ and $ \Lambda_h$ are mutually absolutely continuous.
\end{theorem}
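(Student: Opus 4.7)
The plan is to establish two-sided pointwise estimates
\[
C_2^{-1}\, h(r) \;\leq\; \mu_{K(\gamma)}\bigl(B(y,r)\bigr) \;\leq\; C_1\, h(r)
\]
valid for all $y\in K(\gamma)$ and $r>0$, and then feed them into Theorems~3.1 and~3.2 to conclude mutual absolute continuity of $\mu_{K(\gamma)}$ and $\Lambda_h$ on $K(\gamma)$.

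For the upper bound, given $r<1$ I would pick $s\geq 1$ with $\delta_s\leq r<\delta_{s-1}$. By (2.3) every level-$(s-1)$ basic interval has length at least $\delta_{s-1}>r$, and because these intervals are pairwise disjoint, a pigeonhole argument (if $k\geq 4$ of them all met $B(y,r)$, the two middle ones would lie entirely inside, forcing their combined length $2\delta_{s-1}\leq 2r$, contradicting $r<\delta_{s-1}$) shows that $B(y,r)$ meets at most three of them. Corollary~\ref{e} then gives $\mu_{K(\gamma)}(B(y,r))\leq 3\cdot 2^{-(s-1)}=6\,h(\delta_s)\leq 6\,h(r)$. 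The case $r\geq 1$ is immediate since $h(r)=1$ there and $\mu_{K(\gamma)}$ is a probability measure. To pass from $y\in K(\gamma)$ to arbitrary $y$ (as Theorem~3.1 requires), I would take $y'\in B(y,r)\cap K(\gamma)$ whenever this set is nonempty, use $B(y,r)\subset B(y',2r)$, and absorb the resulting factor $h(2r)\leq 2h(r)$ via (2.4).

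For the lower bound, let $s\geq 1$ be the smallest integer with $l_{j,s}\leq r$, where $I_{j,s}$ is the unique level-$s$ basic interval containing $y$. Then $I_{j,s}\subset B(y,r)$, so Corollary~\ref{e} yields $\mu_{K(\gamma)}(B(y,r))\geq 2^{-s}$. The minimality of $s$ combined with (2.3) gives $r<l_{j',s-1}\leq M\delta_{s-1}$ for the parent $I_{j',s-1}$, whence $r/M<\delta_{s-1}$ and $h(r/M)\leq 2^{-(s-1)}$ by monotonicity. The doubling property (2.4) with $m=M$ then produces
\[
h(r) \;<\; M\, h(r/M) \;\leq\; 2M\cdot 2^{-s} \;\leq\; 2M\, \mu_{K(\gamma)}\bigl(B(y,r)\bigr),
\]
and $r\geq 1$ is trivial since $K(\gamma)\subset[0,1]\subset B(y,r)$.

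Inserting the upper bound into a rescaled version of Theorem~3.1 (applied to $\mu_{K(\gamma)}/C_1$) yields $\mu_{K(\gamma)}\leq C_1\Lambda_h$, while the lower bound fed into Theorem~3.2 with $A=K(\gamma)$ and $C=2M$ yields $\Lambda_h\leq 2M\,b(1)\,\mu_{K(\gamma)}$ on Borel subsets of $K(\gamma)$; mutual absolute continuity is then immediate. The main technical step is calibrating the scale $s$ in the upper bound so that only $O(1)$ basic intervals meet $B(y,r)$ while $h(\delta_s)$ stays comparable to $h(r)$; the window $r\in[\delta_s,\delta_{s-1})$ threads this needle, and everything else is bookkeeping with the doubling property (2.4) and the uniform masses from Corollary~\ref{e}.
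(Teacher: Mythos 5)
Your proposal is correct and follows essentially the same route as the paper: two-sided Frostman-type density estimates $h(r)\lesssim \mu_{K(\gamma)}(B(y,r))\lesssim h(r)$ fed into Theorems 3.1 and 3.2, with the lower bound argued exactly as in the paper via the consecutive basic intervals $I_{i,s}\subset I_{j,s-1}$ with $l_{i,s}\le r<l_{j,s-1}$, Corollary \ref{e}, and the doubling property (2.4) with $m=M$. The only divergence is in the bookkeeping for the upper bound: the paper passes to the hull of $K(\gamma)\cap I$, takes the minimal level $q$ at which it contains a basic interval, covers by four adjacent level-$q$ intervals and handles endpoints lying in $K(\gamma)$ by a limiting argument, whereas you calibrate $s$ by $\delta_s\le r<\delta_{s-1}$ and observe that at most three disjoint level-$(s-1)$ intervals of length $>\delta_{s-1}>r$ can meet a ball of diameter $2r$ --- both yield a harmless absolute constant, and your counting is if anything slightly cleaner.
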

\begin{proof}
Let us fix any open interval $I$ of length $2r$ and show that
\begin{equation}
\mu_{K(\gamma)}(I) \leq 8\,h(r).
\end{equation}
Then,  by Theorem 3.1, $\mu_{K(\gamma)}(E) \leq 8 \, \Lambda_h(E)$ for any Borel set $E$ and
$\mu_{K(\gamma)} \ll \Lambda_h.$

First suppose that the endpoints of $I$ do not belong to $K(\gamma)$. Then there exists $I^{\prime}=[a,b] \subset I$ which contains all points in $K(\gamma)\cap I$. Let us take, as above, minimal $n$ and $q$ such that both
$a$ and $b$  are the endpoints of $n-$th level and $I^{\prime}$ contains at least one basic interval $I_{j,q}.$
All points in $K(\gamma)\cap I$ can be covered by $4$ adjacent intervals of the level $q$ and this cover $G$ contains $4\cdot 2^{s-q}$ intervals of the level $s$ for $s\geq q$. Hence $ \int \chi_G\,\,d\lambda_s=4\cdot 2^{-q}.$
The characteristic function $\chi_{I^{\prime}}$ is continuous on $E_s$ for $s \geq n.$ By Theorem \ref{yyyuuu},

$$ 4\cdot 2^{-q} \geq \lim_{s\rightarrow\infty}\int \, \chi_{I^{\prime}}\,d\lambda_s=
\int\, \chi_{I^{\prime}}\, d\mu_{K(\gamma)} =\mu_{K(\gamma)}(I).$$

On the other hand, $I$ contains some  basic interval $I_{j,q}.$ Therefore  $2r>l_{j,q}$ and, by (2.3),
$$h(2r)\geq h(l_{j,q})\geq h(\delta_{q})=2^{-q}.$$
Combining these inequalities with (2.4) gives (3.1):
 $$8\,h(r)>4\,h(2r)\geq 4\cdot 2^{-q}\geq \mu_{K(\gamma)}(I).$$

Now let us consider the case when at least one of the endpoints of $I=(z-r,z+r)$ is contained in $K(\gamma)$.
Since the set is totally disconnected, we can take two real null sequences $(\alpha_n)_{n=1}^{\infty}$ and $(\beta_n)_{n=1}^{\infty}$ such that the endpoints of $I_n= (z-r-\alpha_n, z+r+\beta_n)$ do not belong to $K(\gamma)$ for each $n$.
Arguing as above, we see that
$$\mu_{K(\gamma)}(I) \leq \mu_{K(\gamma)}(I_n)\leq 8\,h(r+\varepsilon_n) \leq 8(1+\varepsilon_n/r)\,h(r),$$
where $\varepsilon_n=\max\{ \alpha_n,\beta_n\}.$ Since $\varepsilon_n \to 0$ as $n \to \infty,$ we have
(3.1) for this case as well.\\

We proceed to show that $ \Lambda_h  \ll \mu_{K(\gamma)}.$ Let us fix $x \in K(\gamma)$ and $r>0.$ In order to use
Theorem 3.2, let us show that
\begin{equation}
h(r) \leq 2M  \cdot \mu_{K(\gamma)}(I),
\end{equation}
where $I=(x-r, x+r).$ Clearly, it is enough to consider only $r<1.$
Let us fix two consecutive basic intervals containing our point: $x\in I_{i,\,s} \subset I_{j,\,s-1}$ with
$ l_{i,\,s} \leq r < l_{j,\,s-1}.$ Then $I \supset I_{i,\,s}$ and $\mu_{K(\gamma)}(I) \geq 2^{-s},$
by Corollary 3.4. On the other hand, by (2.3) and (2.4),
$$h(r) < h(l_{j,\,s-1}) < h(M\,\delta_{s-1}) < M \,h(\delta_{s-1})= M\,2^{-s+1}.$$
This gives (3.2) and completes the proof.
\end{proof}
\vspace{5mm}

\begin{example} The sequence $\gamma$ with  $\gamma_s=\exp{(-8s+4)}$ for $s\in\mathbb{N}$ satisfies all desired
conditions. In particular, $Cap(K(\gamma)) = \exp{(-12)},$ so $K(\gamma)$ is not polar.
Here, $\delta_s=\exp{(-4 s^2)}$ and
$\eta(t)= s + \frac{-4s^2-\log{t}}{8s+4}$ for $\delta_s \leq t < \delta_{s-1}.$ The Hausdorff measure $\Lambda_h$ corresponding to the function $h=2^{-\eta(t)}$ and $\mu_{K(\gamma)}$ are mutually absolutely continuous.
\end{example}

\section{Regularity of $\mu_{K(\gamma)}$ and $\Lambda_{h}$ in Stahl-Totik sense}

One of active directions of the theory of general orthogonal polynomials is the exploration of
the case of non discrete measures that are singular with respect to the Lebesgue measure. Important class of 
{\it regular in Stahl-Totik sense} measures was introduced in \cite{Stahl} in the following way. Let
$\mu$ be a finite Borel measure with compact support $S_{\mu}$ on $\mathbb{C}$. Then we can uniquely 
define a sequence of orthonormal polynomials $p_n(\mu;z) = a_n z^n + \ldots$ with a positive leading coefficient
$a_n.$ By definition, $\mu \in {\bf Reg}$ if  $\lim_{n\rightarrow\infty} {a_n}^{-\frac{1}{n}} = Cap(S_{\mu})$.
One of sufficient conditions of regularity was suggested in \cite{Stahl} by means of the set
 $A_{\mu}=\{z\in S_{\mu}:\,  \limsup_{r\rightarrow 0^+} \frac{\log 1/\mu(\overline{B(z,r)})}{\log 1/r}<\infty\}.$
 
\begin{theorem}\label{llml} ( T.4.2.1 in \cite{Stahl}) If $Cap(A_{\mu}) = Cap(S_{\mu})$ then $\mu \in {\bf Reg}.$ 
\end{theorem}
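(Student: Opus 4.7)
The plan is to prove the two-sided bound $\lim_n a_n^{-1/n} = Cap(S_\mu)$. The inequality $\limsup a_n^{-1/n} \leq Cap(S_\mu)$ is the easy direction: by the extremal property of orthogonal polynomials, $1/a_n$ equals the minimum $L^2(\mu)$-norm over monic polynomials of degree $n$; testing with the monic Chebyshev polynomial $T_n^\star$ of $S_\mu$ gives
$$\frac{1}{a_n} \leq \|T_n^\star\|_{L^2(\mu)} \leq \sqrt{\mu(S_\mu)}\,\|T_n^\star\|_{L^\infty(S_\mu)},$$
and the Fekete--Szeg\H{o} limit $\|T_n^\star\|_{L^\infty(S_\mu)}^{1/n} \to Cap(S_\mu)$ closes this direction.

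The reverse inequality is the heart of the matter, and I would reduce it to a Bernstein--Markov inequality: for every $\varepsilon > 0$ there is a constant $C_\varepsilon$ such that
$$\|P_n\|_{L^\infty(S_\mu)} \leq C_\varepsilon (1+\varepsilon)^n \|P_n\|_{L^2(\mu)}$$
for every polynomial $P_n$ of degree at most $n$. Granted this, apply it to the monic orthogonal polynomial $M_n = p_n/a_n$: by the Chebyshev lower bound $\|M_n\|_{L^\infty(S_\mu)} \geq Cap(S_\mu)^n$ for monic polynomials, one gets $Cap(S_\mu)^n \leq C_\varepsilon (1+\varepsilon)^n / a_n$, hence $\liminf a_n^{-1/n} \geq Cap(S_\mu)/(1+\varepsilon)$, and letting $\varepsilon \to 0$ finishes the proof.

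To extract the Bernstein--Markov inequality from the hypothesis, I would decompose $A_\mu$ as the increasing union of Borel sets
$$A^{(k)} := \bigl\{z \in S_\mu : \mu(\overline{B(z,r)}) \geq r^k \text{ for every } 0 < r \leq 1/k\bigr\}, \qquad A_\mu = \bigcup_{k=1}^\infty A^{(k)}.$$
By inner regularity and continuity from below of logarithmic capacity on Borel sets, combined with the hypothesis $Cap(A_\mu) = Cap(S_\mu)$, for any $\varepsilon > 0$ I can select $k_\varepsilon$ and a compact $K_\varepsilon \subset A^{(k_\varepsilon)}$ with $Cap(K_\varepsilon) \geq Cap(S_\mu) - \varepsilon$. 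On $K_\varepsilon$ the uniform mass lower bound $\mu(B(z,r)) \geq r^{k_\varepsilon}$ for $z \in K_\varepsilon$ can be parlayed, via a local Bernstein-type gradient estimate, into a ball of radius $r_n \sim n^{-A}$ around each $z \in K_\varepsilon$ on which $|P_n|\geq\tfrac12|P_n(z)|$; integrating $|P_n|^2$ against $\mu$ over that ball yields $\|P_n\|_{L^\infty(K_\varepsilon)} \leq C\, n^\alpha \|P_n\|_{L^2(\mu)}$ with $\alpha = A k_\varepsilon / 2$. Finally, extend from $K_\varepsilon$ to $S_\mu$ by Bernstein--Walsh,
$$\|P_n\|_{L^\infty(S_\mu)} \leq \|P_n\|_{L^\infty(K_\varepsilon)}\,\exp\!\bigl(n\,\max_{S_\mu} g_{K_\varepsilon}\bigr),$$
and use that $Cap(K_\varepsilon)$ being close to $Cap(S_\mu)$ forces $\max_{S_\mu} g_{K_\varepsilon}$ to be small (via comparison of Robin constants and equilibrium potentials), producing the $(1+\varepsilon)^n$ factor claimed after relabelling $\varepsilon$.

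The main obstacle is the internal Bernstein step: the local gradient bound for $P_n$ needs an a priori control on $\|P_n\|_{L^\infty}$ on a neighborhood slightly larger than $B(z,r_n)$, whereas the target inequality is precisely to control that sup norm. The cleanest resolution is a two-scale Bernstein--Walsh argument applied with respect to $K_\varepsilon$ itself at scale $\sim 1/n$, exploiting the quasi-everywhere vanishing of $g_{K_\varepsilon}$ on $K_\varepsilon$; once this scale-matching is arranged, the remaining steps are mechanical.
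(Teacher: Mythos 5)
The paper does not prove this statement --- it is quoted verbatim from Theorem 4.2.1 of \cite{Stahl} --- so there is no in-paper argument to compare against, and I am judging your sketch on its own terms. Your global architecture is sound and is in fact the standard one: the upper estimate $\limsup a_n^{-1/n}\le Cap(S_\mu)$ via Chebyshev polynomials is correct, the reduction of the lower estimate to an $n$th-root comparison of sup and $L^2(\mu)$ norms is correct, and the exhaustion $A_\mu=\bigcup_k A^{(k)}$ combined with inner regularity of capacity is the right first move. But there is a genuine gap exactly at the step you yourself flag as ``the main obstacle'': converting the mass lower bound $\mu(\overline{B(z,r)})\ge r^{k}$ into $\|P_n\|_{L^\infty(K_\varepsilon)}\le C n^{\alpha}\|P_n\|_{L^2(\mu)}$. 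The circularity you identify (a local gradient bound for $P_n$ presupposes the sup-norm control you are trying to prove) is real, and the proposed fix --- a ``two-scale Bernstein--Walsh argument with respect to $K_\varepsilon$'' --- does not break it, since Bernstein--Walsh relative to $K_\varepsilon$ again bounds $|P_n(z)|$ by $\|P_n\|_{L^\infty(K_\varepsilon)}\exp(n g_{K_\varepsilon}(z))$, which is the unknown quantity. The standard resolution applies Bernstein--Walsh not to $K_\varepsilon$ but to the sublevel set $F_n=\{w\in\overline{B(z,r_n)}:|P_n(w)|\le T_n\}$ with $T_n\sim r_n^{-k/2}\|P_n\|_{L^2(\mu)}$: Chebyshev's inequality forces $\mu(F_n)\ge\tfrac12 r_n^{k}$, one then needs a lower bound for $Cap(F_n)$ in terms of a power of $r_n$ (this is where the real work in \cite{Stahl} lies, and where possible atoms of $\mu$ must be handled), and on $F_n$ the sup norm of $P_n$ is at most $T_n$ by construction, which kills the circularity. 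Without this ingredient the heart of the theorem is missing.

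A second, independent error: the claim that $Cap(K_\varepsilon)$ close to $Cap(S_\mu)$ forces $\max_{S_\mu}g_{K_\varepsilon}$ to be small is false in general. Take $\mu$ with an atom at $2$ and $S_\mu=[0,1]\cup\{2\}$, and $K_\varepsilon=[0,1]$: the capacities coincide (a point is polar), yet $g_{[0,1]}(2)=\log(3+2\sqrt{2})>0$ is a fixed positive number. Fortunately this step is also unnecessary: the lower bound $\|Q\|_{L^\infty(K)}\ge Cap(K)^{n}$ for monic $Q$ of degree $n$ holds for \emph{every} compact $K$, so you may apply it directly on $K_\varepsilon$ to get $(Cap(S_\mu)-\varepsilon)^{n}\le\|M_n\|_{L^\infty(K_\varepsilon)}\le C_\varepsilon n^{\alpha}/a_n$, with no need to extend sup norms from $K_\varepsilon$ to $S_\mu$ at all. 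With that shortcut, the only missing ingredient in your argument is the capacity lower bound for the sublevel sets $F_n$ described above.
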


Let us show that, in our case, $A_{\mu} = S_{\mu}$ for both measures  $\mu_{K(\gamma)}$ and $\Lambda_{h}.$

\begin{theorem} Let $K(\gamma)$ satisfy the conditions of Theorem \ref{aaauuu}. Then $\mu_{K(\gamma)}$ and $\Lambda_{h}$ are regular in Stahl-Totik sense.
\end{theorem}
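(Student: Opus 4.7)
The plan is to apply Theorem \ref{llml} twice. For both measures the support equals $K(\gamma)$ (for $\mu_{K(\gamma)}$ by definition; for $\Lambda_h$ because every basic interval carries positive mass, by Corollary \ref{eee}), and $K(\gamma)$ is non-polar by hypothesis. Hence it suffices to prove the purely pointwise statement $A_\mu = K(\gamma)$, i.e.\ that for every $z\in K(\gamma)$,
$$\limsup_{r\to 0^+}\frac{\log 1/\mu(\overline{B(z,r)})}{\log 1/r}<\infty,$$
with $\mu$ being either $\mu_{K(\gamma)}$ or $\Lambda_h$.

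For $\mu_{K(\gamma)}$ I would reuse the lower bound (3.2) already established in the proof of Theorem \ref{aaauuu}: for $z\in K(\gamma)$ and $0<r<1$ one has $\mu_{K(\gamma)}(\overline{B(z,r)})\geq h(r)/(2M)$. Taking logs, the ratio is bounded by $(\log(2M)+\log 1/h(r))/\log 1/r$, and the first summand vanishes as $r\to 0$. So the real task is to control $\log(1/h(r))/\log(1/r)$ uniformly. Here I would use the explicit structure of $h=2^{-\eta}$: for $r\in(\delta_{s+1},\delta_s]$ we have $\eta(r)\leq s+1$, so $\log(1/h(r))\leq (s+1)\log 2$, while the standing assumption $\gamma_i\leq 1/32$ gives
$$\log(1/r)\geq \log(1/\delta_s)=\sum_{i=1}^s\log(1/\gamma_i)\geq s\log 32.$$
Letting $s\to\infty$, the $\limsup$ is bounded by $\log 2/\log 32=1/5$, which is the quantity that already appeared as $d\log h/d\log t$ in Section 2. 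So $A_{\mu_{K(\gamma)}}=K(\gamma)$ and Theorem \ref{llml} delivers regularity of the equilibrium measure.

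For $\Lambda_h$ the same argument transfers via the inequality $\mu_{K(\gamma)}\leq 8\,\Lambda_h$ obtained in (3.1), which yields $\Lambda_h(\overline{B(z,r)})\geq \mu_{K(\gamma)}(\overline{B(z,r)})/8\geq h(r)/(16M)$; the calculation of the previous paragraph then goes through verbatim. The one point worth a line of care is that $\Lambda_h$ is not a probability measure, but Theorem \ref{llml}'s hypothesis on $A_\mu$ is invariant under rescaling $\mu$ by a positive constant (and $\Lambda_h(K(\gamma))<\infty$ by Theorem \ref{ygyg}), so one may normalize without changing anything. I do not expect a serious obstacle: the only place requiring honest work is the uniform bound on $\log(1/h(r))/\log(1/r)$, and this is built into the construction of $K(\gamma)$ through the constraint $\gamma_s\leq 1/32$, which forces $\log(1/\delta_s)$ to grow at least linearly in $s$ while $\eta(\delta_s)=s$.
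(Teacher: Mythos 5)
Your proposal is correct and takes essentially the same route as the paper: both apply Theorem 4.1 by showing $A_\mu=S_\mu=K(\gamma)$ with the same $\limsup$ bound $1/5$, and both dispose of $\Lambda_h$ by reducing to $\mu_{K(\gamma)}$ via $\Lambda_h\geq\mu_{K(\gamma)}/8$. The only cosmetic difference is that you recycle inequality (3.2) and then bound $\log(1/h(r))/\log(1/r)$ from the construction of $\eta$, whereas the paper re-runs the two-scale interval estimate ($l_{i,s}\leq r< l_{j,s-1}$, $\mu(\overline{B(z,r)})\geq 2^{-s}$, $r<M\cdot 32^{1-s}$) directly; the underlying inequalities are identical.
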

\begin{proof} Since $\Lambda_{h}(E)\geq \mu_{K(\gamma)(E)} /8 $ for any Borel subset $E$ of $K(\gamma),$ we only check the equilibrium measure. Let $z\in K(\gamma)$ and $r>0$ be given. As in Theorem 3.5, fix $s$ such  that
$z \in I_{i,\,s} \subset I_{j,\,s-1}$ with $ l_{i,\,s} \leq r < l_{j,\,s-1}.$ 
Then $I_{i,\,s} \subset \overline{B(z,r)}.$ By Corollary 3.4, $\mu(\overline{B(z,r)})\geq \mu(I_{i,\,s}) =2^{-s}.$
On the other hand, $r< M\,\delta_{s-1} \leq M\,32^{1-s}$ as $\gamma_k\leq 1/32.$
Since $s \to \infty$ as $r\rightarrow 0^+,$ we see that 
$$\limsup_{r\rightarrow 0^+} \frac{\log 1/\mu(\overline{B(z,r)})}{\log 1/r}\leq 1/5,$$
which completes the proof.
\end{proof}

\bibliographystyle{amsplain}

\begin{thebibliography}{99}
\bibitem{Batakis} A. Batakis, \textit{Harmonic measure of some Cantor type sets}, Ann. Acad. Sc. Fenn. \textbf{21} (1996), 27-54.

\bibitem{Best} E. Best, \textit{A closed dimensionless linear set}, Proc. Edinburgh Mat. Soc. \textbf{6}(2) (1939), 105-108.

\bibitem{Garnett} J.B. Garnett, D.E. Marshall, \textit{Harmonic measure}, Cambridge University Press, 2005.

\bibitem{Goncharov} A. Goncharov, \textit{Weakly Equilibrium Cantor-type Sets}, Potential Analysis \textbf{40}(2) (2014), 143-161.

\bibitem{Makarov} N.G. Makarov, \textit{On the distortion of boundary sets under conformal mappings}, Proc. London Math. Soc. \textbf{51} (1985), 369-384.

\bibitem{Makarov Volberg} N.G. Makarov, A. Volberg, \textit{On the harmonic measure of discontinuous fractals}, LOMI Preprints, E-6-86, Steklov Mathematical Institute, Leningrad Department, 1986.

\bibitem{Nevanlinna} R. Nevanlinna, \textit{Analytic Functions}, Springer-Verlag, 1970.

\bibitem{Pommerenke} Ch. Pommerenke, \textit{On conformal mapping and linear measure}, J. Anal. Math. \textbf{46}
(1986), 231-238.

\bibitem{Przytycki} F. Przytycki, M. Urbanski, \textit{Conformal fractals: ergodic theory methods.} London Math. Soc. Lecture Note Series, 371. Cambridge University Press, Cambridge, 2010.

\bibitem{Ransford} T. Ransford, \textit{Potential theory in the complex plane}, Cambridge University Press, 1995.

\bibitem{Stahl} H. Stahl, V. Totik, \textit{General Orthogonal Polynomials}, Cambridge University Press, 1992.

\bibitem{Volberg} A. Volberg, \textit{On the dimension of harmonic measure of Cantor repellors}, Michigan Math. J. \textbf{40}(2) (1993), 239-258.

\bibitem{Zdunik} A. Zdunik, \textit{Harmonic measure on the Julia set for polynomial-like maps}, Inventiones Mathematicae \textbf{128}(2) (1997), 303-327.
\end{thebibliography}

\end{document}